\newtheorem{theorem}{Theorem}[section]
\newtheorem{proposition}[theorem]{Proposition}
\newtheorem{lemma}[theorem]{Lemma}
\newtheorem{corollary}[theorem]{Corollary}
\theoremstyle{definition}
\newtheorem{definition}[theorem]{Definition}
\newtheorem{example}[theorem]{Example}
\theoremstyle{remark}
\numberwithin{equation}{section}
\newcommand{\e}{\varepsilon}
\DeclareMathOperator{\mis}{\mathfrak{M}}
\DeclareMathOperator{\Lip}{Lip}
\DeclareMathOperator{\Inv}{Inv}
\newcommand{\M}{\mathscr{M}}
\newcommand{\m}{\mathsf{M}}
\newcommand{\J}{\EuScript{J}}
\newcommand{\C}{\mathbb{C}}
\newcommand{\N}{\mathbb{N}}
\newcommand{\U}{\mathbf{1}}
\newcommand{\z}{\mathbf{0}}
\newcommand{\pp}{$\mathscr{P}$}
\newcommand{\tp}{\otimes}
\newcommand{\ptp}{\mathbin{\wh{\otimes}}}
\newcommand{\wh}{\widehat}
\newcommand{\A}{\mathscr{A}}
\newcommand{\fA}{\mathfrak{A}}
\newcommand{\cI}{\mathcal{I}}
\newcommand{\cX}{\mathcal{X}}
\newcommand{\es}{\emptyset}
\newcommand{\set}[1]{\{#1\}}
\newcommand{\biggset}[1]{\biggl\{ #1 \biggr\}}
\newcommand{\Bigprn}[1]{\Bigl( #1 \Bigr)}
\newcommand{\enorm}{\lVert\,\cdot\,\rVert}
\newcommand{\norm}[1]{\lVert #1 \rVert}
\newcommand{\Bignorm}[1]{\Bigl\lVert #1 \Bigr\lVert}
\newcommand{\lVertt}{\lvert\mspace{-2mu}\lvert\mspace{-2mu}\lvert}
\newcommand{\rVertt}{\rvert\mspace{-2mu}\rvert\mspace{-2mu}\rvert}
\newcommand{\enormm}{\lVertt\, \cdot \,\rVertt}
\newcommand{\normm}[1]{\lVertt #1 \rVertt}
\begin{document}


\title[The character space of vector-valued function algebras]%
{On the character space of Banach vector-valued function algebras}

\author[M. Abtahi]{Mortaza Abtahi}
\address{Mortaza Abtahi\newline
School of Mathematics and Computer Sciences,\newline
Damghan University, Damghan, P.O.BOX 36715-364, Iran}

\email{abtahi@du.ac.ir; mortaza.abtahi@gmail.com}



\maketitle

\begin{abstract}
  Given a compact space $X$ and a commutative Banach algebra
  $A$, the character spaces of $A$-valued function algebras on $X$ are
  investigated. The class of natural $A$-valued function algebras,
  those whose characters can be described by means of characters of $A$ and
  point evaluation homomorphisms, is introduced and studied. For an
  admissible Banach $A$-valued function algebra $\A$ on $X$,
  conditions under which the character space $\mis(\A)$
  is homeomorphic to $\mis(\fA)\times \mis(A)$ are presented, where $\fA=C(X) \cap\A$
  is the subalgebra of $\A$ consisting of scalar-valued functions.
  An illustration of the results is given by some examples.\\[1ex]
\textbf{Keywords:}  Commutative Banach algebras, Banach function algebras,
Vector-valued function algebras, Vector-valued characters, \\
\textbf{MSC(2010):}  Primary 46J10; Secondary 46J20.
\end{abstract}

\section{\bf Introduction and Preliminaries}
\label{sec:intro}

Let $A$ be a commutative unital Banach algebra over the complex field $\C$.
Every nonzero homomorphism $\phi:A\to\C$
is called a \emph{character} of $A$. Denoted by $\mis(A)$, the set of
all characters of $A$ is nonempty and its elements are automatically continuous
\cite[Lemma 2.1.5]{CBA}. Consider the Gelfand transform $\hat A=\set{\hat a:a\in A}$,
where $\hat a:\mis(A)\to\C$ is defined by $\hat a(\phi)=\phi(a)$.
The \emph{Gelfand topology} of $\mis(A)$ is the weakest topology
with respect to which every $\hat a\in \hat A$ is continuous.
Endowed with the Gelfand topology,
$\mis(A)$ is compact and Hausdorff. By \cite[Theorem 2.1.8]{CBA}, an ideal $M$ in $A$
is maximal if and only if $M=\ker\phi$, for some $\phi\in\mis(A)$. For this reason,
sometimes $\mis(A)$ is called the \emph{maximal ideal space} of $A$.
For more on the theory of commutative Banach algebras see, for example,
\cite{BD,Dales,CBA,Zelazko}.

\subsection{Function Algebras}
Let $X$ be a compact Hausdorff space and $C(X)$ be the Banach algebra of all continuous
functions $f:X\to\C$ equipped with the uniform norm $\|f\|_X=\sup\set{|f(x)|:x\in X}$.
A subalgebra $\fA$ of $C(X)$ is called a \emph{function algebra} on $X$ if $\fA$
separates the points of $X$ and contains the constant functions. If $\fA$ is equipped with
some complete algebra norm $\enorm$,
then $\fA$ is called a \emph{Banach function algebra}. If the norm $\enorm$ of
$\fA$ is equivalent to the uniform norm $\enorm_X$, then $\fA$ is called
a \emph{uniform algebra}.

Identifying the character space of a Banach function algebra $\fA$ has
been always a problem of interest for mathematicians in this field.
For every $x\in X$, the evaluation homomorphism
$\e_x:f\mapsto f(x)$ is a character of $\fA$, and
the mapping $J:X\to\mis(\fA)$, $x\mapsto\e_x$, imbeds $X$ homeomorphically as
a compact subset of $\mis(\fA)$. If $J$ is surjective, one calls $\fA$
\emph{natural} \cite[Chapter 4]{Dales}. In this case, the character space $\mis(\fA)$
is identical to $X$. Note that every semisimple commutative Banach algebra
$A$ can be considered, through its Gelfand representation, as a natural Banach function
algebra on its character space $\mis(A)$.

A relation between the character space $\mis(\fA)$ of a Banach function algebra $\fA$
and the character space $\mis(\bar\fA)$ of its uniform closure $\bar\fA$ was revealed
in \cite{Honary} as follows.

\begin{theorem}[Honary \cite{Honary}]
\label{thm:honary}
  The restriction map $\mis(\bar\fA)\to \mis(\fA)$, $\psi\mapsto \psi|_\fA$,
  is a homeomorphism if and only if $\|\hat f\| \leq \|f\|_X$, for all $f\in\fA$.
\end{theorem}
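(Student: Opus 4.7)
The plan is to handle the two implications separately, using automatic continuity of characters on commutative Banach algebras. First, the restriction map is well defined: for $\psi\in\mis(\bar\fA)$, the restriction $\psi|_\fA$ is a homomorphism $\fA\to\C$, nonzero because $\mathbf{1}\in\fA$ and $\psi(\mathbf{1})=1$, so $\psi|_\fA\in\mis(\fA)$.

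For the ``if'' direction, I assume $\|\hat f\|\leq\|f\|_X$ for all $f\in\fA$; equivalently, $|\phi(f)|\leq\|f\|_X$ for every $\phi\in\mis(\fA)$ and $f\in\fA$. Thus each character of $\fA$ is continuous with respect to the uniform norm, and, since $\fA$ is dense in $\bar\fA$ by construction, extends uniquely to a bounded linear functional $\tilde\phi$ on $\bar\fA$. Joint continuity of multiplication in $\bar\fA$ with respect to $\|\cdot\|_X$ then forces $\tilde\phi$ to be multiplicative, so $\tilde\phi\in\mis(\bar\fA)$. This makes the restriction map surjective, and density makes it injective, so it is a bijection. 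To promote it to a homeomorphism I note that both character spaces are compact Hausdorff in their Gelfand topologies, so it suffices to prove continuity. But the Gelfand topology on $\mis(\fA)$ is initial for the evaluations $\phi\mapsto\phi(f)$ with $f\in\fA$, and each such evaluation pulled back through the restriction map is simply the Gelfand evaluation at the same $f\in\bar\fA$, which is continuous by definition of the Gelfand topology on $\mis(\bar\fA)$.

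For the ``only if'' direction, I assume the restriction map is a homeomorphism, in particular surjective. Given $\phi\in\mis(\fA)$, pick $\psi\in\mis(\bar\fA)$ with $\psi|_\fA=\phi$. Since $\bar\fA$ is a commutative unital Banach algebra under the uniform norm, the character $\psi$ is automatically of norm at most one, so $|\phi(f)|=|\psi(f)|\leq\|f\|_X$ for every $f\in\fA$. Taking the supremum over $\phi\in\mis(\fA)$ yields $\|\hat f\|\leq\|f\|_X$.

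I do not expect any genuine obstacle in this proof. The only delicate point is confirming that the extension produced by density in the ``if'' direction is multiplicative, which follows at once from the joint continuity of multiplication in the uniform norm together with the a priori bound $|\phi(f)|\leq\|f\|_X$ that puts $\phi$ in the dual of $(\fA,\|\cdot\|_X)$.
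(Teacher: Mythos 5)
Your argument is correct: the restriction map is always injective and Gelfand-continuous (since characters of the uniform algebra $\bar\fA$ are automatically $\enorm_X$-contractive and determined on the dense subalgebra $\fA$), so the whole content is the equivalence of surjectivity with the bound $\|\hat f\|\leq\|f\|_X$, which you establish in both directions; the multiplicativity of the extended functional and the compactness argument upgrading the continuous bijection to a homeomorphism are both handled properly. The paper itself cites this result from Honary without reproducing a proof, and your reasoning is exactly the standard argument that the paper implicitly relies on later (e.g.\ when it extends a character $\tau$ to $\bar\A$ ``since $\norm{\hat f}\leq\norm{f}_X$'').
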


The above result appears to be very useful in identifying the character spaces in
a wide class of Banach function algebras.  We establish an analogue of this result
for vector-valued function algebras in Section \ref{sec:characters}.

\subsection{Vector-valued Function Algebras}
Let $A$ be a commutative unital Banach algebra, and let $C(X,A)$ be the space of
all $A$-valued continuous functions on $X$. Algebraic operations and the uniform norm $\enorm_X$
on $C(X,A)$ are defined in the obvious way.

\begin{definition}[c.f. \cite{Abtahi-vector-valued,Nikou-Ofarrell}]
\label{dfn:vector-valued-function-algebras}
  A subalgebra $\A$ of $C(X,A)$ is called an \emph{$A$-valued function algebra} on $X$
  if (1) $\A$ contains the constant functions $X\to A$, $x\mapsto a$,
  for all $a\in A$, and (2) $\A$ separates the points of $X$
  in the sense that, for every pair $x,y\in X$ with $x\neq y$, and
  for every maximal ideal $M$ of $A$, there exists some $f \in A$ such that
  $f(x)-f(y)\notin M$. If $\A$ is endowed with some algebra norm $\enormm$ such that
  the restriction of $\enormm$ to $A$ is equivalent to the original norm of $A$
  and $\|f\|_X \leq \normm{f}$, for every $f\in \A$, then $\A$ is called
  a \emph{normed $A$-valued function algebra} on $X$. If the given norm is
  complete, then $\A$ is called a \emph{Banach $A$-valued function algebra}.
  If the given norm is equivalent to the uniform norm $\enorm_X$,
  then $\A$ is called an \emph{$A$-valued uniform algebra}.
  When no confusion can arise, we use the same notation $\enorm$ for the norm
  of $\A$.
\end{definition}

Continuing the work of Yood \cite{Yood}, Hausner \cite{Hausner} proved that
$\tau$ is a character of $C(X,A)$ if, and only if,
there exist a point $x\in X$ and a character $\phi\in\mis(A)$ such that
$\tau(f) = \phi(f(x))$, for all $f\in C(X,A)$, whence $\mis(C(X,A))$ is
homeomorphic to $X \times \mis(A)$. (In this regard, see \cite{Ab-Ab}.)
We call a Banach $A$-valued function algebra \emph{natural} if, like $C(X,A)$,
its character space is identical to $X\times \mis(A)$. For instance,
in Example \ref{exa:Lip(X,A)}, we will see that
the $A$-valued Lipschitz algebra $\Lip(X,A)$ is natural;
see also \cite{Esmaeili-Mahyar}, \cite{Honary-Nikou-Sanatpour}.
Natural $A$-valued function algebras are studied
in Section \ref{sec:natural-function-algebras}.

\subsection{Notations and conventions}
  Throughout, $X$ is a compact Hausdorff space, and
  $A$ is a \emph{semisimple} commutative unital Banach algebra. The unit element of $A$
  is denoted by $\U$, and the set of invertible elements of $A$ is denoted by $\Inv(A)$.
  If $f:X\to\C$ is a function and $a\in A$, we write $fa$ to denote the $A$-valued function
  $X\to A$, $x\mapsto f(x)a$. If $\fA$ is a function algebra on $X$, we let $\fA A$ be
  the linear span of $\set{fa:f\in\fA,\,a\in A}$, so that any element $f\in \fA A$
  is of the form $f=f_1a_1+\dotsb+f_na_n$ with $f_j\in \fA$ and $a_j\in A$.
  Given an element $a\in A$, we use the same notation $a$ for the constant
  function $X\to A$ given by $a(x)=a$, for all $x\in X$, and consider $A$ as a closed
  subalgebra of $C(X,A)$. Since $A$ has a unit element $\U$,
  we identify $\C$ with the closed subalgebra $\C\U$ of $A$. Whence every continuous
  function $f:X\to\C$ can be considered as the continuous $A$-valued function
  $f\U:x\mapsto f(x)\U$. We drop $\U$ using the same notation $f$ for
  this $A$-valued function and adopt the identification $C(X)=C(X)\U$
  as a closed subalgebra of $C(X,A)$. Finally, for a family $\M$ of $A$-valued functions
  on $X$, a point $x\in X$, and a character $\phi\in\mis(A)$, we set
  \[
    \M(x) = \set{f(x):f\in\M}, \quad
    \phi[\M]=\set{\phi\circ f: f\in\M}.
  \]

\section{Natural Vector-valued Function Algebras}
\label{sec:natural-function-algebras}

Let $\A$ be an $A$-valued function algebra on $X$. Assume that $M$ is a maximal ideal of $A$,
$x_0\in X$, and set
  \begin{equation}\label{eqn:natural-form}
    \M=\set{f\in\A:f(x_0)\in M}.
  \end{equation}
The fact that $\M$ is an ideal of $\A$ is obvious.
We prove that $\M$ is maximal. Take a function $g \in \A\setminus \M$
so that $g(x_0)\notin M$. Since $M$ is maximal in $A$,
there exist $a\in M$ and $b\in A$ such that
$\U = a + g(x_0)b$. Consider $b$ as a constant function of $X$ into $A$ and let $f=\U-gb$.
Then $f(x_0)=a\in M$ so that $f\in \M$ and $\U=f+gb$ which means that the ideal of $\A$
generated by $\M\cup\set{g}$ is equal to $\A$. Hence $\M$ is maximal in $\A$.

\begin{definition}
\label{dfn:natural-A-valued-FA}
  An $A$-valued function algebra $\A$ on $X$ is called
  \emph{natural} on $X$, if every maximal ideal $\M$ of $\A$ is of the form
  \eqref{eqn:natural-form}, for some $x_0\in X$ and $M\in \mis(A)$.
\end{definition}

In case $A=\C$, natural $A$-valued function algebras coincide with natural
(complex) function algebras.

\begin{theorem}
\label{thm:if M(x) neq A then}
  Let $\A$ be an $A$-valued function algebra on $X$.
  If $\M$ is a maximal ideal in $\A$ and $\M(x_0)\neq A$, for some
  $x_0\in X$, then
  \begin{enumerate}[\quad$(1)$]
    \item $\M(x_0)$ is a maximal ideal of $A$;

    \item $\M(x)=A$ for $x\neq x_0$;

    \item $\M=\set{f\in \A: f(x_0)\in M}$, where $M=\M(x_0)$.
  \end{enumerate}
\end{theorem}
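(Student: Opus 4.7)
The plan is to establish assertions (1) and (3) jointly, using the ideal construction from the paragraph preceding the theorem, and then obtain (2) by a short contradiction argument that exploits the point-separation condition in the definition of an $A$-valued function algebra.

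First I would check that $\M(x_0)$ is an ideal of $A$: additivity is immediate from $\M$ being closed under addition, and absorption follows because each $a\in A$ lies in $\A$ as a constant function, so $af\in\M$ whenever $f\in\M$, giving $a\cdot f(x_0)=(af)(x_0)\in\M(x_0)$. Since $\M(x_0)\neq A$ by hypothesis, it is contained in some maximal ideal $N$ of $A$. Now I would invoke the paragraph preceding the theorem, which shows that $\mathcal{N}:=\set{f\in\A:f(x_0)\in N}$ is a maximal ideal of $\A$. The inclusion $\M\subseteq\mathcal{N}$ is clear, and since $\mathcal{N}\neq\A$ while $\M$ is maximal, this forces $\M=\mathcal{N}$. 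Embedding the elements of $N$ into $\A$ as constant functions then yields $N\subseteq\M(x_0)$, which combined with $\M(x_0)\subseteq N$ gives $\M(x_0)=N$. This simultaneously proves (1) and (3) with $M=\M(x_0)$.

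For (2), I would argue by contradiction: suppose $\M(x)\neq A$ for some $x\neq x_0$. The argument just given, applied with $x$ in place of $x_0$, makes $M':=\M(x)$ a maximal ideal of $A$ with $\M=\set{f\in\A:f(x)\in M'}$. Testing the two descriptions of $\M$ on constant functions $a\in A$ shows $a\in M$ iff $a\in M'$, hence $M=M'$. The point-separation clause in the definition then supplies $f\in\A$ with $f(x)-f(x_0)\notin M$. Setting $g:=f-f(x_0)$, where $f(x_0)$ is interpreted as a constant function in $\A$, one finds $g(x_0)=\z\in M$, whence $g\in\M$; but $g(x)=f(x)-f(x_0)\notin M$, contradicting $\M=\set{h\in\A:h(x)\in M}$.

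The only real subtlety is in step two, namely justifying that $\mathcal{N}=\set{f\in\A:f(x_0)\in N}$ is actually maximal in $\A$; this is precisely the content of the paragraph displayed just above the theorem, so it should be cited rather than reproduced. Everything else is driven by the maximality of $\M$, the identification $\C\U\subseteq A\subseteq\A$ for constants (already fixed in the Notations subsection), and the point-separation clause from Definition \ref{dfn:vector-valued-function-algebras}.
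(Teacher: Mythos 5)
Your proof is correct, but it follows a different route from the paper's. For assertion (1), the paper argues directly from the maximality of $\M$: given $a\notin\M(x_0)$, the constant function $a$ lies outside $\M$, so $\U=f+ag$ with $f\in\M$, $g\in\A$, and evaluating at $x_0$ shows the ideal of $A$ generated by $\M(x_0)\cup\set{a}$ is all of $A$. You instead embed $\M(x_0)$ in a maximal ideal $N$, invoke the preceding paragraph to get maximality of $\mathcal{N}=\set{f\in\A:f(x_0)\in N}$, and squeeze $\M=\mathcal{N}$; this has the advantage of delivering (3) explicitly at the same time, whereas the paper never spells out the proof of (3) (it is left as an immediate consequence of maximality). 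For (2), the paper again argues directly: using separation it produces $g=f-f(x)$ with $g(x_0)\notin\M(x_0)$, hence $g\notin\M$, writes $\U=h+kg$ with $h\in\M$, and evaluates at $x$ to get $\U=h(x)\in\M(x)$. Your contradiction argument --- running (1) and (3) at a second point $x$, matching $M=M'$ on constants, and then violating point-separation --- is logically heavier (it needs (3) first) but equally valid, and it makes transparent exactly where the separation axiom is indispensable. Both proofs use the same three ingredients (constants lie in $\A$, maximality of $\M$, and the separation clause); the paper's version is more self-contained and constructive, yours reuses the earlier lemma more systematically. No gaps.
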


\begin{proof}
  It is easily verified that $\M(x_0)$ is an ideal. We show that $\M(x_0)$ is maximal.
  Assume that $a\notin\M(x_0)$. Then $a$, as a constant function on $X$,
  does not belong to $\M$. Hence, the ideal of $\A$ generated by $\M\cup \set{a}$
  is equal to $\A$ meaning that $\U=f+ag$, for some $f\in\M$ and $g\in\A$. In particular,
  $\U=f(x_0)+ag(x_0)$ which implies that the ideal of $A$ generated by $\M(x_0)\cup\set{a}$
  is equal to $A$. Hence, $\M(x_0)$ is maximal.

  Now, assume that $x\neq x_0$.
  Since $\A$ separates the points of $X$ (Definition \ref{dfn:vector-valued-function-algebras}),
  for the maximal ideal $\M(x_0)$ in $A$, there is a function $f\in \A$ such that
  $f(x)-f(x_0)\notin \M(x_0)$. Define $g(s)=f(s)-f(x)$ so that
  $g(x_0)\notin \M(x_0)$. This implies that
  $g\notin\M$. Since $\M$ is maximal, there are $h\in \M$ and
  $k\in\A$ such that $h+kg=\U$. Hence, $\U=h(x)\in\M(x)$ and
  $\M(x)=A$.
\end{proof}

It is proved in \cite{Ab-Ab} that the algebra $C(X,A)$ satisfies all conditions
in Theorem \ref{thm:if M(x) neq A then}. Therefore $C(X,A)$ is natural.

\begin{corollary}
\label{cor:A-is-natural-iff-M(x)neqA}
  Let $\A$ be an $A$-valued function algebra on $X$.
  \begin{enumerate}[\quad$(1)$]
    \item The algebra $\A$ is natural if, and only if, for every proper ideal $\cI$ in $\A$,
    there exists some $x_0\in X$ such that $\cI(x_0)\neq A$.

    \item If $\cI$ is an ideal in $\A$ such that $\cI(x_0)$ and $\cI(x_1)$, for
  $x_0\neq x_1$, are proper ideals in $A$, then $\cI$ cannot be maximal in $\A$.
  \end{enumerate}
\end{corollary}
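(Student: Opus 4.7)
Both parts are quick consequences of Theorem \ref{thm:if M(x) neq A then}; the main work is simply to rephrase it in the language of proper (not necessarily maximal) ideals. The plan is to handle (2) first as an immediate contrapositive, and then to use Zorn's lemma in (1) to reduce an arbitrary proper ideal to a maximal one.

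For part (2), I would argue by contradiction. Suppose that $\cI$ is maximal in $\A$ and that $\cI(x_0)$ is a proper ideal of $A$ for some $x_0\in X$. Then $\cI(x_0)\neq A$, so Theorem \ref{thm:if M(x) neq A then}(2) applies and forces $\cI(x)=A$ for every $x\neq x_0$. In particular $\cI(x_1)=A$, which contradicts the assumption that $\cI(x_1)$ is a proper ideal of $A$. Hence $\cI$ cannot be maximal.

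For part (1), the reverse implication is direct: let $\M$ be any maximal ideal of $\A$. Since $\M$ is proper, the hypothesis yields some $x_0\in X$ with $\M(x_0)\neq A$, and then Theorem \ref{thm:if M(x) neq A then}(3) gives $\M=\set{f\in\A:f(x_0)\in M}$ with $M=\M(x_0)\in\mis(A)$, so $\A$ is natural. For the forward implication, assume $\A$ is natural and let $\cI$ be a proper ideal of $\A$. Because $\A$ is a unital commutative algebra, Zorn's lemma produces a maximal ideal $\M$ of $\A$ with $\cI\subseteq\M$. By naturality, $\M=\set{f\in\A:f(x_0)\in M}$ for some $x_0\in X$ and some $M\in\mis(A)$. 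The definition of $\M$ gives $\M(x_0)\subseteq M$, hence $\cI(x_0)\subseteq \M(x_0)\subseteq M\neq A$, as desired.

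The only mild subtlety is the appeal to Zorn's lemma in the forward direction of (1), since the hypothesis is phrased for arbitrary proper ideals rather than maximal ones; everything else is a straightforward reading of Theorem \ref{thm:if M(x) neq A then}, which is why no further obstacle is anticipated.
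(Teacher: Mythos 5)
Your proposal is correct and follows exactly the route the paper intends: the corollary is stated without proof as an immediate consequence of Theorem \ref{thm:if M(x) neq A then}, and your argument (contrapositive of part (2) of that theorem for the second claim, and Zorn's lemma plus part (3) for the first) is the natural way to fill in the omitted details.
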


The next discussion requires a concept of zero sets. The zero set of a function $f:X\to A$
is defined as $Z(f)=\set{x:f(x)=\z}$. This concept of zero set, however,
is not useful here in our discussion because, in general, the algebra $A$ may contain
nonzero singular elements. Instead, the following slightly modified version of this concept
appears to be very useful.

\newcommand{\sing}{Z_\text{\upshape s}}

\begin{definition}
  For a function $f:X\to A$, the \emph{singular set} of $f$ is defined to be
  \begin{equation}
    \sing(f) = \set{x\in X: f(x) \notin \Inv(A)}.
  \end{equation}
\end{definition}


The following is an analogy of \cite[Proposition 4.1.5 (i)]{Dales}.

\begin{theorem}
\label{thm:A-is-natural-iff-f1-...-fn}
  Let $\A$ be a Banach $A$-valued function algebra on $X$.
  Then $\A$ is natural if, and only if,
  for each finite set $\set{f_1,\dotsc,f_n}$ of elements in $\A$
  with $\bigcap_{j=1}^n \sing(f_j)=\es$,
  there exist $g_1,\dotsc,g_n\in \A$ such that
  \[
    f_1g_1+\dotsb+f_ng_n=\U.
  \]
\end{theorem}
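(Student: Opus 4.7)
The plan is to leverage Corollary~\ref{cor:A-is-natural-iff-M(x)neqA}(1), which reduces naturality of $\A$ to the assertion that every proper ideal $\cI \subseteq \A$ satisfies $\cI(x_0) \neq A$ for some $x_0 \in X$. Both implications of the theorem will be proved by contradiction against this criterion.

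For the forward direction, suppose $\A$ is natural and that some $f_1,\dots,f_n \in \A$ with $\bigcap_{j=1}^{n}\sing(f_j)=\es$ fail to generate the unit ideal. Then the ideal $\cI=f_1\A+\dotsb+f_n\A$ is proper in the commutative unital Banach algebra $\A$, so Zorn's lemma embeds it in some maximal ideal $\M$. Naturality supplies $x_0\in X$ and $M\in\mis(A)$ with $\M=\set{f\in\A:f(x_0)\in M}$, and consequently every $f_j(x_0)$ lies in the proper ideal $M$; as proper ideals of $A$ contain no invertible elements, this places $x_0$ in each $\sing(f_j)$, contradicting the hypothesis.

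For the converse, I would assume the finite-cover condition on $\A$ and verify the Corollary's criterion. Let $\cI\subseteq\A$ be a proper ideal, and suppose for contradiction that $\cI(x)=A$ for every $x\in X$; then for each $x$ there exists $f_x\in\cI$ with $f_x(x)=\U$. Since $\Inv(A)$ is open in $A$ and $f_x$ is continuous, the set $X\setminus\sing(f_x)=f_x^{-1}(\Inv(A))$ is an open neighborhood of $x$. Compactness of $X$ extracts finitely many $f_{x_1},\dots,f_{x_n}\in\cI$ with $\bigcap_{j=1}^{n}\sing(f_{x_j})=\es$, and the hypothesis then yields $g_1,\dots,g_n\in\A$ with $\sum_{j=1}^{n} f_{x_j}g_j = \U$, putting $\U$ inside $\cI$ and contradicting its properness.

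The proof is essentially routine once one picks the right framework, and the main conceptual point — rather than any computational obstacle — is recognizing that the correct vector-valued analogue of the zero set in Dales' scalar-valued statement is the singular set $\sing(f)$, and that it is the openness of $\Inv(A)$ in the Banach algebra $A$ (combined with compactness of $X$) that powers the converse direction.
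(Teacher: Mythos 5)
Your proposal is correct and follows essentially the same route as the paper: the forward direction passes from the ideal generated by $f_1,\dotsc,f_n$ to a point $x_0$ with all $f_j(x_0)$ singular (the paper invokes Corollary~\ref{cor:A-is-natural-iff-M(x)neqA} directly on that ideal, while you first embed it in a maximal ideal — an immaterial difference), and the converse is the identical compactness argument using openness of $\Inv(A)$. No gaps.
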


\begin{proof}
  $(\Rightarrow)$ Suppose that $\A$ is natural and, for a finite set $\set{f_1,\dotsc,f_n}$ in $\A$,
  assume that $\sing(f_{1})\cap \dotsb \cap \sing(f_{n})=\es$.
  Let $\cI$ be the ideal generated by $\set{f_1,\dotsc,f_n}$. If $\cI\neq\A$, then,
  since $\A$ is natural, by Corollary \ref{cor:A-is-natural-iff-M(x)neqA},
  there exists a point $x_0\in X$ such that $\cI(x_0)\neq A$. In particular,
  the elements $f_1(x_0),\dotsc, f_n(x_0)$ are all singular in $A$, which means
  that $x_0\in \sing(f_1) \cap \dotsb \cap \sing(f_n)$, a contradiction.
  Therefore, $\cI=\A$ whence there exist $g_1,\dotsc,g_n\in\A$ such that
  $f_1g_1+\dotsb+f_ng_n=\U$.

  $(\Leftarrow)$ To show that $\A$ is natural, we take a maximal ideal $\M$ of $\A$ and,
  using Corollary \ref{cor:A-is-natural-iff-M(x)neqA}, we show that $\M(x_0)\neq A$,
  for some $x_0\in X$. Assume, towards a contradiction, that, for every $x\in X$,
  there exists a function $f_x\in\M$ such that $f_x(x)=\U$.
  Set $V_x=f_x^{-1}(\Inv(A))$. Then $\set{V_x:x\in X}$ is an open covering of
  the compact space $X$. So there exist finitely many points $x_1,\dotsc,x_n\in X$ such that
  $X\subset V_{x_1}\cup \dotsb \cup V_{x_n}$.
  Then $\sing(f_{x_1})\cap \dotsb \cap \sing(f_{x_n})=\es$. By the assumption,
  there exist functions $g_1,\dotsc,g_n\in \A$ such that
  $f_{x_1}g_1+\dotsb+f_{x_n}g_n=\U$. Hence, $\U\in\M$, which is a contradiction.
\end{proof}

Let $f\in\A$ and suppose that $\sing(f)=\es$ so that $f(X)\subset\Inv(A)$.
Since the inverse mapping $a\mapsto a^{-1}$ of $\Inv(A)$ onto itself is
continuous, the mapping $x\mapsto f(x)^{-1}$, denoted by $\U/f$, is a continuous
$A$-valued function on $X$. Hence $f$ is invertible in $C(X,A)$.
However, $f$ may not be invertible in $\A$. Let us call
$\A$ a \emph{full subalgebra} of $C(X,A)$ if every $f\in \A$ that is invertible
in $C(X,A)$ is invertible in $\A$. The following is an analogy of
\cite[Theorem 2.1]{Abtahi-QM}.

\begin{theorem}
\label{thm:if-bar(A)-is-natural}
  Let $\A$ be a Banach $A$-valued function algebra on $X$ such that $\bar\A$, the
  uniform closure of $\A$, is natural. If $\U/f\in \A$ whenever $f\in \A$ and
  $\sing(f)=\es$, then $\A$ is natural.
\end{theorem}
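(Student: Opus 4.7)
The plan is to verify the Bezout-type criterion of Theorem \ref{thm:A-is-natural-iff-f1-...-fn} for $\A$ by transporting a solution from the natural algebra $\bar\A$ into $\A$ and then repairing it using the full-subalgebra hypothesis.

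First, I would fix a finite set $f_1,\dotsc,f_n\in\A$ with $\sing(f_1)\cap\dotsb\cap\sing(f_n)=\es$. Viewing these as elements of $\bar\A$, which is itself a Banach $A$-valued (indeed uniform) function algebra on $X$ and is natural by hypothesis, Theorem \ref{thm:A-is-natural-iff-f1-...-fn} produces $h_1,\dotsc,h_n\in\bar\A$ with $f_1h_1+\dotsb+f_nh_n=\U$. These $h_j$ need not lie in $\A$, so the next step is to approximate each of them uniformly by $g_j\in\A$ and to set $F=f_1g_1+\dotsb+f_ng_n\in\A$. Then
\[
  \|F-\U\|_X \leq \sum_{j=1}^n \|f_j\|_X\,\|g_j-h_j\|_X,
\]
which can be made as small as desired.

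Now comes the quantitative tightening: since $\Inv(A)$ is open in $A$ and contains $\U$, there exists $\e>0$ with $\set{a\in A:\|a-\U\|<\e}\subseteq \Inv(A)$. Choosing the $g_j$ so that $\|F-\U\|_X<\e$ forces $F(x)\in \Inv(A)$ for every $x\in X$, i.e.\ $\sing(F)=\es$. The hypothesis then yields $\U/F\in\A$, so putting $\tilde g_j := g_j\,(\U/F)\in\A$ gives
\[
  f_1\tilde g_1+\dotsb+f_n\tilde g_n = F\cdot(\U/F) = \U.
\]
Applying Theorem \ref{thm:A-is-natural-iff-f1-...-fn} in the reverse direction to $\A$ then concludes that $\A$ is natural.

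The main obstacle is precisely the passage from $\bar\A$-solutions to $\A$-solutions: a uniform approximation of the $h_j$ does not automatically yield an exact Bezout identity in $\A$, but perturbing the identity and exploiting the openness of $\Inv(A)$ reduces the problem to inverting a single function $F\in\A$ with empty singular set, which is exactly what the full-subalgebra hypothesis supplies.
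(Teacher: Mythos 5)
Your proposal is correct and follows essentially the same route as the paper: reduce to the Bezout criterion of Theorem \ref{thm:A-is-natural-iff-f1-...-fn}, solve in $\bar\A$, approximate the solution uniformly from $\A$, observe that the perturbed sum $F$ has empty singular set, and invert $F$ inside $\A$ using the hypothesis. The only cosmetic difference is that you invoke the openness of $\Inv(A)$ abstractly, whereas the paper uses the explicit Neumann-series bound $\|\U - F\|_X < 1$; both are valid.
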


\begin{proof}
  We apply Theorem \ref{thm:A-is-natural-iff-f1-...-fn} to prove that $\A$ is natural.
  Let $f_1,\dotsc, f_n$ be elements in $\A$ such that $\sing(f_1) \cap \dotsb \cap \sing(f_n)=\es$.
  We prove the existence of a finite set $\set{g_1,\dotsc, g_n}$ of elements in $\A$ such that
  $f_1g_1 + \dotsb +f_ng_n = \U$. Regarding $f_1, \ldots ,f_n$ as elements of $\bar \A$,
  since $\bar \A$ is natural, again by Theorem \ref{thm:A-is-natural-iff-f1-...-fn},
  there exist $h_1,\dotsc, h_n$ in $\bar \A$ such that $f_1 h_1+\cdots+f_nh_n =\U$.
  For each $h_j$, there is some $g_j\in \A$ such that $\|h_j-g_j\|_X <\sum_{j=1}^n \|f_j\|_X$. Thus
  \begin{equation}
    \Bignorm{\U-\sum_{j=1}^n f_j g_j}_X =
    \Bignorm{\sum_{j=1}^n f_j h_j - \sum_{j=1}^n f_j g_j}_X
    \leq \sum_{j=1}^n \|f_j\|_X \|h_j-g_j\|_X
    < 1.
  \end{equation}

  Hence, for every $x\in X$, $f(x) = \sum f_j(x) g_j(x)$ is an invertible element of $A$,
  so that for the function $f=\sum f_jg_j$, which belongs to $\A$, we have $\sing(f)=\es$.
  By the assumption, there is a function $g$ in $\A$ such that
  $\U= fg = \sum f_j (g_jg)$. Now, Theorem \ref{thm:A-is-natural-iff-f1-...-fn} shows that
  $\A$ is natural.
\end{proof}

An application of the above theorem is given in Example \ref{exa:Lip(X,A)}.

\medskip
Let $\A$ be a Banach $A$-valued function algebra. For every point $x\in X$ and
character $\phi\in \mis(A)$ define
\[
  \e_x \diamond \phi:\A\to\C, \quad
  \e_x \diamond \phi(f) = \e_x(\phi\circ f)= \phi(f(x)).
\]
Then $\e_x \diamond \phi$ is a character of $\A$ with
$\ker(\e_x \diamond \phi)=\set{f\in\A:f(x)\in \ker\phi}$, which of course is of
the form \eqref{eqn:natural-form}. Define
\begin{equation}\label{eqn:mapping-J1}
  \J:X\times \mis(A) \to \mis(\A), \quad (x,\phi)\to \e_x \diamond \phi.
\end{equation}

\begin{theorem}
\label{thm:J-1-is-an-embedding-1}
  The mapping $\J$
  is a homeomorphism of $X\times \mis(A)$ onto a compact subset of $\mis(\A)$.
  If $\A$ is natural, then $\mis(\A)$ is homeomorphic to $X\times \mis(A)$.
\end{theorem}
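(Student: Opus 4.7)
The plan is to verify four things in turn:
(i) $\J$ is injective;
(ii) $\J$ is continuous from the product topology on $X\times\mis(A)$ to the Gelfand topology on $\mis(\A)$;
(iii) $\J$ is a homeomorphism onto a compact subset of $\mis(\A)$ (automatic from (i), (ii) and the standard fact that a continuous injection from a compact space to a Hausdorff space is an embedding);
(iv) if $\A$ is natural, $\J$ is surjective, whence a full homeomorphism.

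For (i), assume $\e_x\diamond\phi=\e_y\diamond\psi$. Evaluating on any constant function $a\in A\subset\A$ gives $\phi(a)=\psi(a)$ for all $a\in A$, hence $\phi=\psi$. If moreover $x\ne y$, the separation property in Definition~\ref{dfn:vector-valued-function-algebras}, applied to the maximal ideal $\ker\phi$, produces $f\in\A$ with $f(x)-f(y)\notin\ker\phi$, contradicting $\phi(f(x))=\phi(f(y))$.

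For (ii), take a net $(x_\alpha,\phi_\alpha)\to(x,\phi)$ in $X\times\mis(A)$. For every $f\in\A$, the triangle inequality together with the standard bound $\norm{\phi_\alpha}\leq 1$ gives
\[
  \bigabs{\phi_\alpha(f(x_\alpha))-\phi(f(x))}
  \leq \norm{f(x_\alpha)-f(x)} + \bigabs{\phi_\alpha(f(x))-\phi(f(x))},
\]
which tends to $0$ by continuity of $f$ at $x$ and weak-$*$ convergence $\phi_\alpha\to\phi$ on $A$. Thus $\J(x_\alpha,\phi_\alpha)\to\J(x,\phi)$ in the Gelfand topology, and (iii) is then immediate.

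For (iv), fix $\tau\in\mis(\A)$ and set $\M=\ker\tau$, a maximal ideal of $\A$. By Definition~\ref{dfn:natural-A-valued-FA} there exist $x_0\in X$ and a maximal ideal $M$ of $A$ such that $\M=\set{f\in\A:f(x_0)\in M}$. Writing $M=\ker\phi$ for the unique $\phi\in\mis(A)$, we obtain $\ker\tau=\ker(\e_{x_0}\diamond\phi)$; since both $\tau$ and $\e_{x_0}\diamond\phi$ are unital characters of $\A$ sharing a common kernel of codimension one, they must agree, so $\tau=\e_{x_0}\diamond\phi=\J(x_0,\phi)$. I do not anticipate a serious obstacle: each step is either a direct reading of the definitions or an application of standard Banach-algebra facts. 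The only subtlety is the uniform bound on $\norm{\phi_\alpha}$ used in step (ii), which is what allows the joint convergence $(x_\alpha,\phi_\alpha)\to(x,\phi)$ to transfer through the evaluation $(x,\phi,f)\mapsto\phi(f(x))$.
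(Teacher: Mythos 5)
Your proof is correct and follows essentially the same route as the paper: continuity via the estimate $|\phi_\alpha(f(x_\alpha))-\phi(f(x))|\leq\norm{\phi_\alpha}\norm{f(x_\alpha)-f(x)}+|\phi_\alpha(f(x))-\phi(f(x))|$ (the paper phrases this with basic neighbourhoods rather than nets), compactness of the domain to upgrade a continuous injection to an embedding, and naturality to read off surjectivity from the form of the maximal ideals. Your explicit verification of injectivity via the separation property is a small completeness bonus over the paper, which leaves that step implicit.
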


\begin{proof}
  Take $x\in X$, $\phi\in \mis(A)$, and set $\tau_0=\e_{x}\diamond\phi$.
  Let $W$ be a neighbourhood of $\tau_0$ in $\mis(\A)$ of the form
  \[
    W=\set{\tau\in\mis(\A):|\tau(f_i)-\tau_0(f_i)|<\e,\, 1\leq i \leq n},
  \]
  where $f_1,\dotsc,f_n\in \A$. Take
  \begin{align*}
    U &= \set{y\in X:\|f_i(y)-f_i(x)\|<\e/2,\, 1\leq i \leq n}, \\
    V &= \set{\psi\in \mis(A):|\psi(f_i(x))-\phi(f_i(x))|<\e/2,\, 1\leq i \leq n}.
  \end{align*}

  Then $U$ is a neighbourhood of $x$ in $X$ and $V$ is a neighbourhood of $\phi$ in $\mis(A)$,
  so that $U\times V$ is a neighbourhood of $(x,\phi)$ in $X\times \mis(A)$.
  If $(y,\psi)\in U\times V$ then, for every $i$ $(1\leq i \leq n$),
  \begin{align*}
    |\psi(f_i(y))-\phi(f_i(x))|
     & \leq |\psi(f_i(y))-\psi(f_i(x))| + |\psi(f_i(x))-\phi(f_i(x))| \\
     & < \|\psi\|\|f_i(y)-f_i(x)\|+\e/2 \leq \e/2+\e/2=\e.
  \end{align*}

  This shows that $\e_y\diamond \psi\in W$ and thus $\J$ is continuous.
  Finally, if $\A$ is natural then every maximal ideal of $\A$ is of the form
  \eqref{eqn:natural-form} which means that every character $\tau\in\mis(\A)$
  is of the form $\tau=\e_x\diamond \phi$, for some $x\in X$ and $\phi\in \mis(A)$.
  Hence, $\J$ is a surjection and thus a homeomorphism.
\end{proof}

\section{Characters on Vector-valued Function Algebras}
\label{sec:characters}

We turn to a more general case where a vector-valued function algebra
may not be natural. Let $\A$ be a Banach $A$-valued function algebra.
We show that, under certain conditions, the character space $\mis(\A)$ is identical
to $\mis(\fA)\times \mis(A)$, where $\fA=C(X) \cap\A$ is the subalgebra of $\A$
consisting of scalar-valued functions. To this end, we should restrict ourself
to the class of admissible algebras. If $f\in \A$ and $\phi\in\mis(A)$,
it is clear that $\phi\circ f\in C(X)$; it is not, however,  clear whether
the $A$-valued function $(\phi\circ f)\U$ belongs to $\A$. In fact,
\cite[Example 2.4]{Abtahi-vector-valued} shows that it may very well happen that
$(\phi\circ f)\U\notin \A$.

\begin{definition}[\cite{Abtahi-vector-valued}]
  The $A$-valued function algebra $\A$ is called \emph{admissible}و if
  \begin{equation}\label{eqn:admissible-A-valued-FA}
    \set{(\phi\circ f)\U:f\in\A,\, \phi\in\mis(A)}\subset \A.
  \end{equation}
  Note thatو $\A$ is admissible if, and only if, $\phi[\A]\U\subset \A$, for all
  $\phi\in \mis(A)$.
\end{definition}

Admissible vector-valued function algebras exist around in abundant.
Some typical examples are $C(X,A)$, $\Lip(X,A)$, $P(K,A)$, $R(K,A)$, etc.
Tensor products of the form $\fA\tp A$, where $\fA$ is a (Banach) function
algebra on $X$, can be seen as admissible $A$-valued function algebras.
(More details are given in Example \ref{exa:Tensor Products}.)

\medskip
During this section, we assume that $\A$ is admissible and set $\fA=\A\cap C(X)$.
Then $\fA$ is the subalgebra
of $\A$ consisting of all complex functions in $\A$, it forms a complex function
algebra by itself, and $\fA=\phi[\A]$, for all $\phi\in\mis(A)$.
Our aim is to give a description of maximal ideals in $\A$. To begin,
take a character $\phi\in\mis(A)$ and a maximal ideal $\m$ of $\fA$, and set
\begin{equation}\label{eqn:semi-natural-form}
  \M = \set{f\in\A: \phi\circ f\in \m}.
\end{equation}

Then $\M$ is a maximal ideal of $\A$. One way to see this (though it can be seen
directly) is as follows. Take $\psi\in\mis(\fA)$ with $\m=\ker\psi$ and define
\[
  \psi \diamond \phi :\A\to\C, \quad \psi \diamond \phi(f)=\psi(\phi\circ f).
\]
Note that $\psi(\phi\circ f)$ is meaningful since $\phi\circ f\in\fA$.
The functional $\psi \diamond \phi$ is a character of $\A$ with
$\ker(\psi \diamond \phi) = \M$. Hence $\M$ is a maximal ideal of $\A$.
The main question is whether any maximal ideal $\M$ of $\A$ is of
the form \eqref{eqn:semi-natural-form}.

\begin{lemma}
\label{lem:M-is-semi-natural-iff-phi[M] neq fA}
  A maximal ideal $\M$ of $\A$ is of the form \eqref{eqn:semi-natural-form}
  if and only if $\phi[\M]\neq\fA$ for some $\phi\in \mis(A)$.
\end{lemma}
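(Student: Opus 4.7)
The plan is to exploit the correspondence, noted just before the lemma, between pairs $(\psi,\phi)\in\mis(\fA)\times\mis(A)$ and maximal ideals of $\A$ of the form \eqref{eqn:semi-natural-form}. The direction $(\Rightarrow)$ is immediate: if $\M=\set{f\in\A:\phi\circ f\in\m}$ for a proper ideal $\m$ of $\fA$, then by definition $\phi[\M]\subset\m\subsetneq\fA$, so $\phi[\M]\neq\fA$.

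For $(\Leftarrow)$, the strategy is to show that for \emph{any} maximal ideal $\M$ of $\A$ and \emph{any} $\phi\in\mis(A)$, the image $\phi[\M]$ is an ideal of $\fA$; once this is in hand, if $\phi[\M]$ is proper, extend it to a maximal ideal $\m$ of $\fA$ and set $\M'=\set{f\in\A:\phi\circ f\in\m}$. By the remark preceding the lemma, $\M'=\ker(\psi\diamond\phi)$ for any $\psi\in\mis(\fA)$ with $\ker\psi=\m$, hence $\M'$ is itself a proper (indeed maximal) ideal of $\A$. Since $\phi[\M]\subset\m$, we have $\M\subset\M'$, and maximality of $\M$ together with properness of $\M'$ forces $\M=\M'$, which is the required form.

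The main point is therefore to verify that $\phi[\M]$ sits inside $\fA$ and is an ideal of $\fA$. This is where admissibility is used: for $f\in\A$, the function $\phi\circ f$ is continuous, and $(\phi\circ f)\U\in\A$ by \eqref{eqn:admissible-A-valued-FA}, so $\phi\circ f\in\A\cap C(X)=\fA$ under the identification $C(X)=C(X)\U$. Linearity of $\phi$ gives closure of $\phi[\M]$ under addition and scalar multiplication; and for $g\in\fA$ and $f\in\M$, noting that $\phi\circ g=g$ (since $g=g\U$ and $\phi(g(x)\U)=g(x)$), one computes $g\cdot(\phi\circ f)=(\phi\circ g)(\phi\circ f)=\phi\circ(gf)$, with $gf\in\M$ because $\M$ is an ideal of $\A$ and $g\in\fA\subset\A$. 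I expect this verification---keeping straight the identifications between scalar-valued and $\U$-valued functions and invoking admissibility at the right moment---to be the only real obstacle; the remainder of the argument is a standard maximality squeeze.
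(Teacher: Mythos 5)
Your proof is correct and follows essentially the same route as the paper: both establish that $\phi[\M]$ is a proper ideal of $\fA$ sitting inside a maximal ideal $\m$, form $\M_1=\set{f\in\A:\phi\circ f\in\m}$, and conclude $\M=\M_1$ by a maximality squeeze. The only (immaterial) difference is that the paper proves $\phi[\M]$ is itself maximal in $\fA$ (writing $\U=f+gh$ in $\A$ and applying $\phi$ pointwise), whereas you only check that $\phi[\M]$ is a proper ideal and then extend it to a maximal one.
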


\begin{proof}
  If $\M$ is of the form \eqref{eqn:semi-natural-form} then clearly $\phi[\M]\neq \fA$.
  Conversely, assume that $\phi[\M]\neq \fA$ for some $\phi\in\mis(A)$.
  Then $\phi[\M]$ is an ideal of $\fA$. We show that it is maximal.
  If $g\notin\phi[\M]$, then $g=g\U$ (as an $A$-valued function on $X$)
  does not belong to $\M$. Since $\M$ is maximal
  in $\A$, the ideal generated by $\M\cup \set{g}$ is equal to $\A$. This implies
  that $\U=f+gh$, for some $f\in\M$ and $h\in\A$. Since $\phi\circ g = g$, we get
  $1=\phi\circ f+ g (\phi\circ h)$. This means that the ideal of $\fA$ generated
  by $\phi[\M]\cup\set{g}$ is equal to $\fA$. Thus, $\phi[\M]$ is maximal.
  Set $\m=\phi[\M]$ and $\M_1=\set{f\in\A: \phi\circ f\in \m}$. Then
  $\M\subset \M_1$ and both $\M$ and $\M_1$ are maximal ideals. Hence,
  $\M=\M_1$.
\end{proof}

If $\M=\ker\tau$, for some $\tau\in\mis(\A)$, then $\M$ is of the form
\eqref{eqn:semi-natural-form} if and only if $\tau=\psi\diamond \phi$,
for some $\psi\in\mis(\fA)$ and $\phi\in\mis(A)$.
Let us extend the mapping $\J$ in \eqref{eqn:mapping-J1} to $\mis(\fA)\times\mis(A)$
as follows.
\begin{equation}\label{eqn:mapping-J2}
  \J:\mis(\fA)\times\mis(A)\to \mis(\A), \quad \J(\psi,\phi)= \psi \diamond \phi.
\end{equation}

\noindent
The mapping is injective for if $\psi\diamond\phi=\psi'\diamond\phi'$ then
\begin{align*}
  \phi(a) & =\psi(\phi(a))=\psi'(\phi'(a))=\phi'(a) \quad (a\in A),\\
  \psi(f) & =\psi(\phi(f))=\psi'(\phi'(f))=\psi'(f) \quad (f\in\fA),
\end{align*}
which implies that $\phi=\phi'$ and $\psi=\psi'$. The main question is whether $\J$
is surjective. If $\tau\in\mis(\A)$ then  $\phi=\tau|_A\in \mis(A)$ and
$\psi=\tau|_\fA\in\mis(\fA)$. The question is whether the equality $\tau=\psi\diamond \phi$
holds true; of course, it does hold if $\phi[\M]\neq\fA$.

\begin{theorem}
\label{thm:J-is-homeo-if}
  If the mapping $\J$ in \eqref{eqn:mapping-J2} is a surjection, then
  it is a homeomorphism and, therefore, the character space $\mis(\A)$
  is identical to $\mis(\fA)\times\mis(A)$.
\end{theorem}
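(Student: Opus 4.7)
The plan is to avoid proving continuity of $\J$ directly---doing so would require a delicate interchange between the Gelfand topologies on $\mis(\fA)$ and $\mis(A)$---and instead to construct a manifestly continuous inverse of $\J$, after which the standard principle that a continuous bijection from a compact space onto a Hausdorff space is a homeomorphism does all the work.

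First I would introduce the restriction map
\[
  R:\mis(\A)\to\mis(\fA)\times\mis(A), \qquad R(\tau)=\bigprn{\tau|_\fA,\,\tau|_A}.
\]
Since $\tau(\U)=1$, neither restriction vanishes, so each component is a character of the respective algebra and $R$ is well defined. Its continuity is immediate from the definitions: both $\tau\mapsto\tau(g)$ for $g\in\fA\subset\A$ and $\tau\mapsto\tau(a)$ for $a\in A\subset\A$ are continuous on $\mis(\A)$ by the very definition of the Gelfand topology, and these are exactly the coordinate functions of $R$.

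Next I would verify that $R$ and $\J$ are mutually inverse. Using the admissibility of $\A$---which guarantees that $\phi\circ f$ lies in $\fA$ whenever $f\in\A$ and $\phi\in\mis(A)$---a brief computation gives
\[
  \J(\psi,\phi)(a)=\psi(\phi(a)\U)=\phi(a), \qquad \J(\psi,\phi)(g)=\psi(\phi\circ g)=\psi(g),
\]
for $a\in A$ and for $g\in\fA$ identified with $g\U\in\A$. Hence $R\circ\J=\mathrm{id}$, which is essentially the injectivity calculation performed just before the theorem statement. The hypothesis that $\J$ is surjective then forces $\J\circ R=\mathrm{id}$ as well: every $\tau\in\mis(\A)$ equals $\J(\psi_0,\phi_0)$ for a unique pair, which by the formulae above must coincide with $R(\tau)$, so $\J(R(\tau))=\tau$. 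Therefore $R=\J^{-1}$ is a continuous bijection.

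To conclude, $\mis(\A)$ is compact as the character space of a commutative Banach algebra, and $\mis(\fA)\times\mis(A)$ is Hausdorff, so the continuous bijection $R$ is automatically closed and hence a homeomorphism; consequently $\J=R^{-1}$ is a homeomorphism as well. There is no serious obstacle in this argument; the only point deserving a moment of care is the silent use of admissibility to interpret $\phi\circ g$ as an element of $\fA$, after which the proof reduces to the structural observation that inverting $\J$ (by restriction) is much easier than verifying continuity of $\J$ itself.
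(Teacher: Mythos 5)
Your proof is correct, and it reaches the conclusion by a somewhat different route than the paper. The paper also reduces everything to the fact that a bijection between compact Hausdorff spaces is a homeomorphism once it is open (equivalently, once its inverse is continuous), but it establishes this by a direct computation with basic Gelfand neighbourhoods: given $U\times V$ around $(\psi_0,\phi_0)$ determined by finite sets $F_1\subset\fA$ and $F_2\subset A$, it builds a neighbourhood $W$ of $\tau_0=\psi_0\diamond\phi_0$ out of the finite set $F_1\cup F_2\subset\A$ and compares the two, which requires a little bookkeeping with the $\e$'s and implicitly uses surjectivity to decompose an arbitrary $\tau\in W$ as $\psi\diamond\phi$. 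You instead name the inverse explicitly as the restriction map $R(\tau)=(\tau|_\fA,\tau|_A)$, whose continuity is immediate because its coordinate functions $\tau\mapsto\tau(g)$ and $\tau\mapsto\tau(a)$ are Gelfand transforms of elements of $\A$; the identities $R\circ\J=\mathrm{id}$ (the injectivity computation already done before the theorem) and $\J\circ R=\mathrm{id}$ (forced by surjectivity) then finish the argument. The two proofs are mathematically equivalent --- openness of $\J$ is exactly continuity of $R$ --- but your packaging buys a cleaner argument with no neighbourhood-basis estimates, at the small cost of having to check that $R$ lands in $\mis(\fA)\times\mis(A)$ (which you do, via $\tau(\U)=1$) and of the silent use of admissibility so that $\psi(\phi\circ f)$ is meaningful, which you correctly flag.
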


\begin{proof}
  Suppose that $\J$ is a surjection (an thus a bijection). Since both the domain and
  the range are compact Hausdorff spaces, it suffices to prove that $\J$ is
  open. Take $\psi_0\in \mis(\fA)$, $\phi_0\in \mis(A)$ and set
  $\tau_0=\J(\psi_0,\phi_0)=\psi_0\diamond \phi_0$. Let $U$ and $V$ be neighborhoods
  of $\psi_0$ and $\phi_0$ of the following form
  \begin{align*}
    U & = \set{\psi\in\mis(\fA): |\psi(f)-\psi_0(f)|<\e_1 \quad (f\in F_1)}, \\
    V & = \set{\phi\in\mis(A): |\phi(a)-\phi_0(a)|<\e_2 \quad (a\in F_2)},
  \end{align*}
  where $F_1$ and $F_2$ are finite sets in $\fA$ and $A$, respectively.
  Take $F=F_1\cup F_2$ as a finite set in $\A$, $\e=\min\set{\e_1,\e_2}$ and
  set
  \[
    W = \set{\tau\in\mis(\A): |\tau(f)-\tau_0(f)|<\e \quad (f\in F)}.
  \]
  Then $W$ is a neighborhood of $\tau_0$ in $\mis(\A)$ and
  $\J(U\times V) \subset W$. Hence $\J$ is open.
\end{proof}

The rest of this section is devoted to investigating conditions under which $\J$ is surjective.

\begin{theorem}[\pp]
\label{thm:tau=psi o phi}
  For a character $\tau\in\mis(\A)$ with $\M=\ker \tau$ and $\phi=\tau|_A$,
  the following are equivalent.
  \begin{enumerate}[\upshape(i)]
    \item \label{item:phi[M] neq fA}
    $\phi[\M]\neq \fA$.

    \item \label{item:M-is-of-semi-natural-form}
    $\M$ is of the form \eqref{eqn:semi-natural-form} with $\m=\phi[\M]$.

    \item \label{item:phi o f = 0 then f in M}
    For every $f\in \A$, if $\phi\circ f=\z$ then $f\in \M$.

    \item \label{item:tau(phi o f)=tau(f)}
    For every $f\in \A$, $\tau(\phi\circ f)=\tau(f)$.

    \item \label{item:f(X)subset M then f in M}
    For every $f\in \A$, if $f(X)\subset\M$ then $f\in\M$.

    \item \label{item:tau=psi diamond phi}
    $\tau=\psi\diamond\phi$, for some $\psi\in\mis(\fA)$.
  \end{enumerate}
\end{theorem}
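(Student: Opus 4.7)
The plan is to establish the cycle (i)$\Rightarrow$(ii)$\Rightarrow$(vi)$\Rightarrow$(iv)$\Rightarrow$(iii)$\Rightarrow$(i), together with the trivial equivalence (iii)$\Leftrightarrow$(v). Two preliminary observations will be used throughout. Since $\phi=\tau|_A$ is the restriction of $\tau$ to the subalgebra of constant functions, we have $\M\cap A=\ker\phi$; in particular $\phi(\U)=1$, and the condition ``$f(X)\subset\M$'' in (v) amounts to $f(x)\in\ker\phi$ for every $x\in X$, i.e.\ $\phi\circ f=\z$. This yields (iii)$\Leftrightarrow$(v) at once. Second, admissibility of $\A$ guarantees $(\phi\circ f)\U\in\A$ for every $f\in\A$, so that the symbol $\tau(\phi\circ f)$ in (iv) and the phrase ``$\phi\circ f\in\m$'' in (ii) are meaningful.

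For the main cycle, (i)$\Rightarrow$(ii) is immediate from Lemma~\ref{lem:M-is-semi-natural-iff-phi[M] neq fA}. For (ii)$\Rightarrow$(vi), given $\M=\set{f\in\A:\phi\circ f\in\m}$ with $\m=\phi[\M]$ a maximal ideal of $\fA$, I pick $\psi\in\mis(\fA)$ with $\ker\psi=\m$; a direct computation shows $\ker(\psi\diamond\phi)=\M=\ker\tau$, and two characters with the same kernel must coincide. For (vi)$\Rightarrow$(iv), using the identification $\phi\circ f\equiv(\phi\circ f)\U$ and $\phi(\U)=1$,
\[
  \tau(\phi\circ f) \;=\; \psi\bigl(\phi\circ((\phi\circ f)\U)\bigr)
  \;=\; \psi(\phi\circ f)
  \;=\; \tau(f).
\]
For (iv)$\Rightarrow$(iii), if $\phi\circ f=\z$ then $\tau(f)=\tau(\phi\circ f)=\tau(\z)=0$, hence $f\in\M$.

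The one step where any real work is required is (iii)$\Rightarrow$(i), which I plan to handle by contrapositive. Assume $\phi[\M]=\fA$; then there is $f_0\in\M$ with $\phi\circ f_0$ equal to the scalar constant function $1$. Set $g=f_0-\U\in\A$; since $\phi(\U)=1$, we get $\phi\circ g=1-1=\z$. Hypothesis (iii) would then force $g\in\M$, but this entails $\U=f_0-g\in\M$, contradicting the properness of $\M$. Therefore (iii) cannot hold unless (i) does.

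I do not foresee any genuine obstacle here: each arrow reduces to a short algebraic manipulation once Lemma~\ref{lem:M-is-semi-natural-iff-phi[M] neq fA} and admissibility are available. The only point demanding care is bookkeeping the dual role of scalar functions --- as elements of $\fA$, and via the embedding $\C\U\hookrightarrow A$ as elements of $\A$. This identification is precisely where admissibility enters, and it is what makes every expression in the statement well posed.
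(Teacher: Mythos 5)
Your proposal is correct; every arrow in your cycle checks out. The one fact you invoke that the paper does not state explicitly --- that two characters of a unital algebra with the same kernel coincide --- is standard (if $\ker\tau_1=\ker\tau_2$, then $f-\tau_1(f)\U\in\ker\tau_2$ gives $\tau_2(f)=\tau_1(f)$), so there is no gap.

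The route differs from the paper's in how condition \eqref{item:tau=psi diamond phi} is reached. The paper proves \eqref{item:phi o f = 0 then f in M}$\Rightarrow$\eqref{item:tau=psi diamond phi} by \emph{constructing} $\psi$ from scratch: since admissibility gives $\fA=\phi[\A]$, one may define $\psi(\phi\circ f)=\tau(f)$ and then check well-definedness using \eqref{item:phi o f = 0 then f in M}. You instead obtain \eqref{item:tau=psi diamond phi} from \eqref{item:M-is-of-semi-natural-form} by choosing $\psi$ with $\ker\psi=\m$ and matching kernels, which pushes all the real content into Lemma \ref{lem:M-is-semi-natural-iff-phi[M] neq fA} and avoids the well-definedness check entirely. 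Correspondingly, the paper gets \eqref{item:tau(phi o f)=tau(f)} from \eqref{item:phi o f = 0 then f in M} via the function $g=f-(\phi\circ f)\U$, whereas you derive it downstream from \eqref{item:tau=psi diamond phi} by a one-line computation, and you close the cycle with \eqref{item:phi o f = 0 then f in M}$\Rightarrow$\eqref{item:phi[M] neq fA} (via $g=f_0-\U$) in place of the paper's \eqref{item:tau=psi diamond phi}$\Rightarrow$\eqref{item:phi[M] neq fA}. The trade-off: your argument is a genuine single cycle and leans on the abstract kernel--character correspondence, which makes it slightly slicker; the paper's version is more self-contained and makes visible exactly where admissibility (through $\fA=\phi[\A]$) is doing work, namely in the explicit construction of $\psi$. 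Both treatments of \eqref{item:phi o f = 0 then f in M}$\Leftrightarrow$\eqref{item:f(X)subset M then f in M} are identical.
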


\begin{proof}
  The equivalence
  $\eqref{item:phi[M] neq fA} \Leftrightarrow \eqref{item:M-is-of-semi-natural-form}$
  is just Lemma \ref{lem:M-is-semi-natural-iff-phi[M] neq fA}.
  The implication $\eqref{item:M-is-of-semi-natural-form} \Rightarrow
  \eqref{item:phi o f = 0 then f in M}$ is clear.
  To see the implication
  $\eqref{item:phi o f = 0 then f in M}\Rightarrow \eqref{item:tau(phi o f)=tau(f)}$,
  let $g=f-(\phi\circ f)\U$. Then $\phi\circ g=\z$
  and thus $g\in\M$ and $\tau(g)=0$. Hence, $\tau(\phi\circ f)=\tau(f)$.
  The implication
  $\eqref{item:tau(phi o f)=tau(f)}\Rightarrow\eqref{item:phi o f = 0 then f in M}$
  is clear.

  To prove $\eqref{item:phi o f = 0 then f in M}\Leftrightarrow
  \eqref{item:f(X)subset M then f in M}$, we note that
  $f(X)\subset\M$ if and only if $\phi\circ f=\z$. In fact,
  $f(X)\subset \M$ means that, for every $x\in X$, the element $f(x)$, as
  a constant function of $X$ into $A$, belongs to $\M$. This, in turn, means that
  $\tau(f(x))=\phi(f(x))=0$, for all $x\in X$, which means that $\phi\circ f=\z$.

  To prove $\eqref{item:phi o f = 0 then f in M}\Rightarrow\eqref{item:tau=psi diamond phi}$,
  first note that $\A$ being admissible implies that
  \[ \fA=\phi[\A]=\set{\phi\circ f:f\in\A}. \]
  Define $\psi:\fA\to\C$ by $\psi(\phi\circ f)=\tau(f)$.
  This is well-defined for if $\phi\circ f=\phi\circ g$ then, by the assumption,
  $f-g\in\M$ which in turn implies that $\tau(f)=\tau(g)$. Obviously,
  $\psi\in\mis(\fA)$ and $\tau=\psi\diamond\phi$.

  Finally, we prove that $\eqref{item:tau=psi diamond phi}\Rightarrow\eqref{item:phi[M] neq fA}$.
  Towards a contradiction, assume that $\phi[\M]=\fA$. Then $\phi\circ f=\U$, for some
  $f\in\M$. Hence $1=\psi(\U)=\psi(\phi\circ f)=\tau(f)=0$
  which is absurd.
\end{proof}

\noindent
\textit{Convention.}
We say that `$\A$ has property \pp' if every $\M\in\mis(\A)$ satisfies one (and hence all)
of conditions in Theorem \ref{thm:tau=psi o phi}. Hence $\A$ has \pp{} if and only if
the mapping $\J$ in \eqref{eqn:mapping-J2} is surjective.

\medskip
Let $\bar\A$ denote the uniform closure of $\A$ in $C(X,A)$. The restriction map
\begin{equation}\label{eqn:restriction-map}
  \mis(\bar\A)\to\mis(\A),\quad \bar\tau\mapsto\bar\tau|_\A,
\end{equation}
is one-to-one and continuous with respect to the Gelfand topology \cite{Honary}.
We write $\mis(\bar\A)=\mis(\A)$ if it is a homeomorphism.

\begin{proposition}
  If $\A$ has \pp{} then $\bar \A$ has \pp. If $\bar \A$ has \pp{} and
  $\norm{\hat f} \leq \norm{f}_X$, for all $f\in \A$, then $\A$ has \pp.
\end{proposition}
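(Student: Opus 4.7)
The plan is to use the characterization of property \pp{} in Theorem \ref{thm:tau=psi o phi}, specifically the cleanest formulation \eqref{item:tau(phi o f)=tau(f)}: $\tau$ has the desired form if and only if $\tau(\phi\circ f) = \tau(f)$ for every $f$, where $\phi = \tau|_A$. Two preliminary observations make both directions run smoothly. First, $\bar\A$ is itself admissible: if $f \in \bar\A$ and $g_n \in \A$ converges uniformly to $f$, then $(\phi\circ g_n)\U \in \A$ converges uniformly to $(\phi\circ f)\U$ by continuity of $\phi$, so the limit lies in $\bar\A$. Second, $\bar\A \cap C(X) = \bar\fA$ (the uniform closure of $\fA$), as $\phi\circ g_n \in \fA$ for any $\phi \in \mis(A)$, and $\phi\circ g_n \to f$ uniformly whenever $g_n \to f$ with $f$ scalar-valued.

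For the first direction, take $\bar\tau \in \mis(\bar\A)$ and let $\tau = \bar\tau|_\A \in \mis(\A)$ and $\phi = \tau|_A = \bar\tau|_A \in \mis(A)$. Since $\A$ has \pp, Theorem \ref{thm:tau=psi o phi}\eqref{item:tau(phi o f)=tau(f)} gives $\tau(\phi\circ g) = \tau(g)$ for every $g \in \A$. To upgrade this to $\bar\A$, approximate $f \in \bar\A$ uniformly by $g_n \in \A$; then $\phi\circ g_n \to \phi\circ f$ uniformly in $\bar\fA$. Both $\bar\tau(g_n)$ and $\bar\tau(\phi\circ g_n)$ equal each other for each $n$ by the previous identity (valid on $\A$), and both sides converge because $\bar\tau$ is automatically continuous with respect to $\enorm_X$. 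Passing to the limit yields $\bar\tau(\phi\circ f) = \bar\tau(f)$, which is condition \eqref{item:tau(phi o f)=tau(f)} for $\bar\A$, so $\bar\A$ has \pp.

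For the second direction, take $\tau \in \mis(\A)$. Since $|\tau(f)| = |\hat f(\tau)| \leq \norm{\hat f} \leq \norm{f}_X$ by hypothesis, the character $\tau$ is continuous with respect to the uniform norm and therefore extends (by density) to a character $\bar\tau \in \mis(\bar\A)$. By the assumption that $\bar\A$ has \pp, we can write $\bar\tau = \bar\psi \diamond \bar\phi$ with $\bar\psi \in \mis(\bar\fA)$ and $\bar\phi \in \mis(A)$. Setting $\phi = \bar\phi$ and $\psi = \bar\psi|_\fA \in \mis(\fA)$ and invoking admissibility of $\A$ (which ensures $\phi\circ f \in \fA$ for $f \in \A$), we compute $\tau(f) = \bar\tau(f) = \bar\psi(\phi\circ f) = \psi(\phi\circ f) = \psi\diamond\phi(f)$, so $\tau = \psi\diamond\phi$ and $\A$ has \pp.

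The only genuinely delicate step is the limiting argument in the first direction, where one must verify that condition \eqref{item:tau(phi o f)=tau(f)}, which a priori lives only on $\A$, survives uniform limits; this reduces to the automatic continuity of characters on the Banach algebra $\bar\A$ equipped with $\enorm_X$ and the obvious uniform continuity of $f \mapsto \phi\circ f$. Everything else is a routine application of Theorem \ref{thm:tau=psi o phi} together with the two preliminary observations about $\bar\A$.
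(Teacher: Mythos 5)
Your proof is correct and follows essentially the same route as the paper: both directions reduce to condition \eqref{item:tau(phi o f)=tau(f)} of Theorem \ref{thm:tau=psi o phi}, with a uniform-limit argument for the first implication and a norm-continuous extension of $\tau$ to $\bar\A$ for the second. Your preliminary checks that $\bar\A$ is admissible and that $\bar\A\cap C(X)=\bar\fA$ are a welcome addition the paper leaves implicit, but they do not change the substance of the argument.
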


\begin{proof}
  Suppose that $\A$ has \pp. Take $\bar\tau\in \mis(\bar\A)$, set $\tau=\bar\tau|_\A$
  and $\phi=\bar\tau|_A=\tau|_A$.  Since $\A$ has \pp,
  by Theorem \ref{thm:tau=psi o phi}~\eqref{item:tau(phi o f)=tau(f)},
  $\tau(\phi\circ f)=\tau(f)$, for all $f\in \A$. Given $f\in\bar \A$,
  there is a sequence $\set{f_n}$ in $\A$ such that $\|f_n-f\|_X\to0$.
  Hence, $\|\phi\circ f_n-\phi\circ f\|_X\to0$, and thus
  \[
    \bar\tau(\phi\circ f)
      = \lim_{n\to\infty} \bar\tau(\phi\circ f_n)
      = \lim_{n\to\infty} \tau(\phi\circ f_n)
      = \lim_{n\to\infty} \tau(f_n)
      = \lim_{n\to\infty} \bar\tau(f_n)
      = \bar\tau(f).
  \]
  Again, by Theorem \ref{thm:tau=psi o phi}~\eqref{item:tau(phi o f)=tau(f)},
  we see that $\bar \A$ has \pp.

  Now, assume that $\bar \A$ has \pp, and $\norm{\hat f} \leq \norm{f}_X$, for all $f\in \A$.
  Take $\tau\in\mis(\A)$ and $\phi=\tau|_A$. Extend $\tau$ to a character
  $\bar\tau:\bar\A\to\C$ (this is possible since $\norm{\hat f} \leq \norm{f}_X$, for all $f\in \A$).
  Note that still we have $\phi=\bar\tau|_A$.
  Since $\bar\A$ satisfies \pp, we have $\bar\tau(\phi\circ f)=\bar\tau(f)$,
  for all $f\in\bar \A$. This implies that $\tau(\phi\circ f)=\tau(f)$, for all $f\in \A$,
  and thus $\A$ has \pp.
\end{proof}

The following is a vector-valued version of Theorem \ref{thm:honary}.

\begin{theorem}
  For an admissible Banach $A$-valued function algebra $\A$ with $\fA=C(X)\cap\A$,
  let $\bar\A$ and $\bar\fA$ be the uniform closures of $\A$ and $\fA$, respectively.
  Consider the following statements:
  \begin{enumerate}[\upshape(i)]
    \item \label{item:restriction-map-cA}
    $\mis(\bar\A)=\mis(\A)$.

    \item \label{item:norm(f)-cA}
    $\norm{\hat f} \leq \norm{f}_X$, for all $f\in \A$.

    \item \label{item:norm(f)-fA}
    $\norm{\hat f} \leq \norm{f}_X$, for all $f\in \fA$.

    \item \label{item:restriction-map-fA}
    $\mis(\bar\fA)=\mis(\fA)$.
  \end{enumerate}
  Then $\eqref{item:restriction-map-cA}\Leftrightarrow \eqref{item:norm(f)-cA}
  \Rightarrow \eqref{item:norm(f)-fA} \Leftrightarrow \eqref{item:restriction-map-fA}$.
  If $\A$ satisfies \pp, then $\eqref{item:norm(f)-fA} \Rightarrow \eqref{item:norm(f)-cA}$.
\end{theorem}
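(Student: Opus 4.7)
The plan is to establish the chain $\eqref{item:restriction-map-cA}\Leftrightarrow\eqref{item:norm(f)-cA}\Rightarrow\eqref{item:norm(f)-fA}\Leftrightarrow\eqref{item:restriction-map-fA}$ in pieces and then close the loop under \pp. For $\eqref{item:restriction-map-cA}\Leftrightarrow\eqref{item:norm(f)-cA}$, the restriction map in \eqref{eqn:restriction-map} is already injective and continuous between compact Hausdorff spaces, so one only needs to control its surjectivity. If \eqref{item:norm(f)-cA} holds, then any $\tau\in\mis(\A)$ satisfies $|\tau(f)|=|\hat f(\tau)|\leq\norm{\hat f}\leq\norm{f}_X$ on $\A$, hence extends by uniform continuity and density of $\A$ in $\bar\A$ to $\bar\tau:\bar\A\to\C$; multiplicativity survives the limit, so $\bar\tau\in\mis(\bar\A)$ restricts to $\tau$. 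Conversely, if the restriction is a homeomorphism, then $\sup_{\tau\in\mis(\A)}|\tau(f)|=\sup_{\bar\tau\in\mis(\bar\A)}|\bar\tau(f)|\leq\norm{f}_X$, the last step using that characters of the unital Banach algebra $(\bar\A,\enorm_X)$ are contractive.

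For $\eqref{item:norm(f)-cA}\Rightarrow\eqref{item:norm(f)-fA}$, I would show that the restriction map $\mis(\A)\to\mis(\fA)$, $\tau\mapsto\tau|_\fA$, is surjective. Given $\psi\in\mis(\fA)$ and any fixed $\phi_0\in\mis(A)$, the character $\psi\diamond\phi_0\in\mis(\A)$ satisfies $(\psi\diamond\phi_0)(f)=\psi(\phi_0\circ f)=\psi(f)$ for every $f\in\fA$, because $\phi_0\circ(f\U)=f$ under the identification $C(X)=C(X)\U$. Consequently, for each $f\in\fA$,
\[
  \sup_{\psi\in\mis(\fA)}|\psi(f)|\;\leq\;\sup_{\tau\in\mis(\A)}|\tau(f)|\;\leq\;\norm{f}_X,
\]
which is \eqref{item:norm(f)-fA}. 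The equivalence $\eqref{item:norm(f)-fA}\Leftrightarrow\eqref{item:restriction-map-fA}$ is then exactly Theorem \ref{thm:honary} applied to the scalar Banach function algebra $\fA$ on $X$.

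For the conditional converse, assume $\A$ has \pp{} together with \eqref{item:norm(f)-fA}; I would deduce \eqref{item:norm(f)-cA} by factoring characters through $\fA$. By the convention following Theorem \ref{thm:tau=psi o phi}, property \pp{} forces every $\tau\in\mis(\A)$ to have the form $\tau=\psi\diamond\phi$ with $\psi\in\mis(\fA)$ and $\phi\in\mis(A)$. Admissibility guarantees $\phi\circ f\in\fA$ for every $f\in\A$, and then
\[
  |\tau(f)|\;=\;|\psi(\phi\circ f)|\;\leq\;\norm{\widehat{\phi\circ f}}\;\leq\;\norm{\phi\circ f}_X\;\leq\;\norm{f}_X,
\]
using \eqref{item:norm(f)-fA} at the middle inequality and contractivity of the character $\phi$ at the last. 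Taking the supremum over $\tau$ yields \eqref{item:norm(f)-cA}. The only real subtlety throughout is bookkeeping: one must keep straight which Gelfand norm the symbol $\norm{\hat f}$ refers to in each clause (that of $\A$ in \eqref{item:norm(f)-cA}, that of $\fA$ in \eqref{item:norm(f)-fA}), and exploit surjectivity rather than injectivity of the restriction $\mis(\A)\to\mis(\fA)$, which ultimately rests on admissibility of $\A$.
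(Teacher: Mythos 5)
Your proof is correct and follows the paper's decomposition of the chain: the two outer equivalences come from Honary's theorem (which you reprove rather than cite --- your extension-by-density argument for (i) $\Leftrightarrow$ (ii) is exactly the content of Theorem \ref{thm:honary} applied to $\A$), and your argument for the conditional implication (iii) $\Rightarrow$ (ii) under \pp{} is verbatim the paper's: write $\tau=\psi\diamond\phi$ and estimate $|\tau(f)|=|\psi(\phi\circ f)|\leq\norm{\widehat{\phi\circ f}}\leq\norm{\phi\circ f}_X\leq\norm{f}_X$. The one place you genuinely diverge is (ii) $\Rightarrow$ (iii). The paper dismisses this as obvious from $\fA\subset\A$, which tacitly rests on the fact that the Gelfand norm equals the spectral radius $\lim_n \normm{f^n}^{1/n}$, a quantity depending only on the norm and hence the same whether $f$ is viewed in the closed subalgebra $\fA$ or in $\A$. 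You instead prove that the restriction $\mis(\A)\to\mis(\fA)$ is surjective by sending $\psi$ to $\psi\diamond\phi_0$ for a fixed $\phi_0\in\mis(A)$ and checking $\phi_0\circ(f\U)=f$ for $f\in\fA$; this makes the comparison of the two Gelfand norms explicit, at the cost of invoking admissibility, which the spectral-radius route does not need. Both are valid, and you are right to flag that $\norm{\hat f}$ refers to different character spaces in (ii) and (iii) --- that is precisely the point the paper's ``obvious'' glosses over.
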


\begin{proof}
  The equivalences $\eqref{item:restriction-map-cA}\Leftrightarrow \eqref{item:norm(f)-cA}$
  and $\eqref{item:norm(f)-fA} \Leftrightarrow \eqref{item:restriction-map-fA}$
  follow from the main theorem in \cite{Honary}. The implication $\eqref{item:norm(f)-cA}
  \Rightarrow \eqref{item:norm(f)-fA}$ is obvious, because $\fA\subset \A$.

  Assume that $\A$ satisfies \pp, and $\norm{\hat f} \leq \norm{f}_X$, for all $f\in \fA$.
  Fix a function $f\in\A$ and take an arbitrary character $\tau\in\mis(\A)$.
  Since $\A$ has \pp, we have $\tau=\psi\diamond\phi$, where $\psi=\tau|_\fA$ and $\phi=\tau|_A$.
  Since $\phi\circ f\in\fA$, we have
  \[
    |\tau(f)|=|\psi(\phi\circ f)| \leq \norm{\widehat{\phi\circ f}}
    \leq \norm{\phi\circ f}_X \leq \norm{f}_X.
  \]
  Hence $\|\hat f\|\leq\|f\|_X$, for all $f\in\A$.
\end{proof}

\section{Examples}
\label{sec:Examples}

To illustrate the results, we devote this section to some examples.

\begin{example}\label{exa:Lip(X,A)}
  Let $(X,\rho)$ be a compact metric space. A function $f : X \to A$
  is called an \emph{$A$-valued Lipschitz function} if
  \begin{equation}
    L(f)=\sup\biggset{\frac{\norm{f(x)-f(y)}}{\rho(x,y)}:x,y\in X,\, x\neq y}<\infty.
  \end{equation}

  The space of $A$-valued Lipschitz functions on $X$ is denoted by $\Lip(X,A)$.
  For any $f\in \Lip(X,A)$, the \emph{Lipschitz norm} of $f$ is defined by
  $\norm{f}_L = \norm{f}_X + L(f)$. This makes $\Lip(X,A)$
  an admissible Banach $A$-valued function algebra on $X$ with
  $\Lip(X)=\Lip(X,A)\cap C(X)$, where $\Lip(X)=\Lip(X,\C)$ is
  the classical complex Lipschitz algebra on $X$.

  The algebra $\Lip(X)$ satisfies all conditions in the Stone-Weierstrass Theorem
  and thus it is dense in $C(X)$. On the other hand, by \cite[Lemma 1]{Hausner},
  $C(X)A$ is dense in $C(X,A)$ and thus $\Lip(X)A$ is dense in $C(X,A)$.
  Since $\Lip(X,A)$ contains $\Lip(X)A$, we see that $\Lip(X,A)$
  is dense in $C(X,A)$.

  It is easy to verify that if $f\in\Lip(X,A)$ and $\sing(f)=\es$,
  then $\U/f\in\Lip(X,A)$. Since $C(X,A)$ is natural, Theorem \ref{thm:if-bar(A)-is-natural}
  now implies that $\Lip(X,A)$ is natural. By Theorem \ref{thm:J-1-is-an-embedding-1},
  $\mis(\Lip(X,A))$ is homeomorphic to $X\times\mis(A)$.
  See also \cite{Esmaeili-Mahyar} and \cite{Honary-Nikou-Sanatpour}.
\end{example}

\begin{example}
  Assume that $A=\C^n$, for some positive integer $n$. Then, for every admissible
  Banach $A$-valued function algebra $\A$ on $X$, the mapping $\J$ in
  \eqref{eqn:mapping-J2} is surjective and thus $\mis(\A)$ is identical
  to $\mis(\fA)\times \mis(\C^n)$.

  To see this,
  we show that $\A$ satisfies condition \eqref{item:phi[M] neq fA} of
  Theorem \ref{thm:tau=psi o phi}. Note that $\mis(\C^n)=\set{\pi_1,\dotsc,\pi_n}$,
  where $\pi_i:\C^n\to\C$ is the projection on $i$-th component. Assume $\M$
  is an ideal in $\A$ and $\U\in\pi_i[\M]$,
  for all $i=1,\dotsc,n$. Hence, for every $i$, there is some $f_i\in\M$ such that
  $\pi_i\circ f_i=\U$. Let $\set{e_1,\dotsc,e_n}$ be the standard basis of $\C^n$.
  Then $e_i$, as a constant function of $X$ into $A$, belongs to $\A$.
  Since $\M$ is an ideal, we have $\U=e_1f_1+\dotsb+e_nf_n\in\M$.
  Hence, $\M=\A$ and $\M$ cannot be maximal.
\end{example}

If $\cX=\set{1,\dotsc,n}$, then $\C^n=C(\cX)$. The above example states that,
given any admissible Banach $C(\cX)$-valued function algebra,
we have $\mis(\A)=\mis(\fA)\times \mis(C(\cX))$.
If $\cX$ is an arbitrary compact Hausdorff space, it is unknown whether the result still
holds for any admissible Banach $C(\cX)$-valued function algebra. But, the following
shows that it does hold for admissible $C(\cX)$-valued uniform algebras.

\begin{example}
  Assume that $A=C(\cX)$, for some compact Hausdorff space $\cX$. Then,
  for every admissible $A$-valued uniform algebra $\A$ on $X$, the mapping $\J$
  in \eqref{eqn:mapping-J2} is surjective,
  and thus $\mis(\A)=\mis(\fA)\times \mis(C(\cX))$.

  To see this, first we show that $\A$ is isometrically isomorphic to $C(\cX,\fA)$.
  Take a function $f\in\A$. Then $f(x)$, for every $x\in X$, is a function in $C(\cX)$.
  Define $\tilde f:\cX\to\fA$ by $\tilde f(\xi)(x) = f(x)(\xi)$.
  In fact, $\tilde f(\xi) = \phi_\xi \circ f$ where $\phi_\xi$ is the evaluation
  character of $A=C(\cX)$ at $\xi$, and, since $\A$ is admissible, $\tilde f(\xi)$ belongs
  to $\fA$. Now, define
  $T:\A \to C(\cX,\fA)$ by $Tf=\tilde f$. It is easily verified that $T$ is an algebra homomorphism, and
  \begin{equation*}
    \|f\|_X = \sup_{x\in X} \|f(x)\| = \sup_{x\in X} \sup_{\xi\in \cX} |f(x)(\xi)|
     = \sup_{\xi\in \cX} \|\tilde f(\xi)\| = \|\tilde f\|_\cX.
  \end{equation*}

  Since the range of $T$ contains all elements of the form
  $g_1h_1+\dotsb+g_nh_n$, where $n\in\N$, $g_i\in C(\cX)$ and $h_i\in\fA$,
  and these functions are dense in $C(\cX,\fA)$, we have $T$ surjective.
  It follows that $T$ is an isometric isomorphism. By \cite[Theorem]{Hausner},
  $\mis(C(\cX,\fA))$ is identical to $\mis(\fA)\times \cX$,
  which means that $\mis(\A)$ is identical to $\mis(\fA)\times \mis(A)$.
\end{example}

\begin{example}[Tensor Products]
\label{exa:Tensor Products}
Let $\fA$ be a Banach function algebra on $X$ and consider the algebraic tensor product
$\fA\tp A$. There exists, by \cite[Theorem 42.6]{BD}, a linear operator $T:\fA\tp A\to \fA A$
such that
\begin{equation}\label{eqn:T:fA tp A to C(X,A)}
  T \Bigprn{\sum_{i=1}^n f_i \tp a_i} = \sum_{i=1}^n f_i a_i.
\end{equation}

The operator $T$ is an algebra isomorphism so that $\fA\tp A$ can be seen as
an admissible $A$-valued function algebra on $X$. We identify every element
$f\in \fA\tp A$ with its image $Tf$ as an $A$-valued function on $X$.
Let $\enorm_\gamma$ be an algebra cross-norm on $\fA\tp A$ so that
the completion $\fA\ptp_\gamma A$ is a Banach algebra.
The mapping $T$ extends to an isometric isomorphism of
$\fA\ptp_\gamma A$ onto a Banach $A$-valued function algebra on $X$.
For example, if $\enorm_\epsilon$ is the injective tensor norm, then
$\fA\ptp_\epsilon A$ is isometrically isomorphic to the uniform closure $\overline{\fA A}$
of $\fA A$ and $\|f\|_\epsilon = \|f\|_X$, for all $f\in \fA\tp A$.

It is proved in \cite{abt-far} that
\begin{enumerate}
  \item $\fA\ptp_\gamma A$ is an admissible Banach $A$-valued function algebra on $X$.

  \item If $f\in \fA \ptp_\gamma A$ and $\phi\in A^*$ then $\phi\circ f\in \fA$
  and $\|\phi\circ f\| \leq \|\phi\|\|f\|_\gamma$.
\end{enumerate}

We now show that every $\tau\in\mis(\fA\ptp_\gamma A)$ is of the form $\tau=\psi\diamond \phi$,
with $\phi=\tau|_A$ and $\psi=\tau|_\fA$. Since $\fA\tp A$ is dense
in $\fA\ptp_\gamma A$, it is enough to show that $\tau=\psi\diamond \phi$
on $\fA\tp A$. First, note that every $f\in \fA\tp A$ can be seen, through
the isomorphism \eqref{eqn:T:fA tp A to C(X,A)}, as
$f=f_1 a_1+\dotsb+f_na_n$. Hence, $\phi \circ f=\phi(a_1)f_1+\dotsb+\phi(a_n)f_n$ and
\begin{align*}
  \tau(f) & = \tau(f_1 a_1+\dotsb+f_na_n) \\
          & = \tau(f_1)\tau(a_1)+\dotsb+\tau(f_n)\tau(a_n) \\
          & = \psi(f_1)\phi(a_1)+\dotsb+\psi(f_n)\phi(a_n) \\
          & = \psi(\phi(a_1)f_1+\dotsb+\phi(a_n)f_n) \\
          & = \psi(\phi\circ f).
\end{align*}

This proves that $\tau=\psi\diamond\phi$ on $\fA\tp A$ and thus
$\tau=\psi\diamond \phi$ on $\fA\ptp_\gamma A$. We conclude that
$\mis(\fA\ptp_\gamma A) = \mis(\fA)\times\mis(A)$. This result, however, can
be derived from the following more general result due to Tomiyama \cite{Tomiyama}.

\begin{theorem}[\cite{Tomiyama}]
   Suppose that $A$ and $B$ are commutative Banach algebras. If $A\ptp_\gamma B$
   is a Banach algebra for a cross-norm $\gamma$, then
   $\mis(A\ptp_\gamma B)$ is homeomorphic to $\mis(A)\times \mis(B)$.
\end{theorem}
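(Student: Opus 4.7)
The plan is to construct an explicit map
\[
\Phi : \mis(A) \times \mis(B) \to \mis(A \ptp_\gamma B), \quad \Phi(\phi, \psi) = \phi \diamond \psi,
\]
where $\phi \diamond \psi$ denotes the unique continuous extension to $A \ptp_\gamma B$ of the functional determined on the algebraic tensor product by $(\phi \diamond \psi)(a \tp b) = \phi(a)\psi(b)$, and to show that $\Phi$ is a continuous bijection between compact Hausdorff spaces.

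The first task is to verify that $\phi \diamond \psi$ is a well-defined character of $A \ptp_\gamma B$. On $A \tp B$ one checks linearity and multiplicativity of $\phi \diamond \psi$ by bilinear extension from its definition on elementary tensors. Since characters of a Banach algebra are contractive, $|\phi(a)\psi(b)| \leq \|a\|\|b\| = \gamma(a \tp b)$; combined with $\gamma$ being a cross-norm under which $A \ptp_\gamma B$ is a Banach algebra, this provides the continuity needed to extend $\phi \diamond \psi$ to all of $A \ptp_\gamma B$.

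Next, I would establish bijectivity. For injectivity: if $\phi \diamond \psi = \phi' \diamond \psi'$, choose $b_0 \in B$ with $\psi(b_0) \neq 0$; then $\phi(a)\psi(b_0) = \phi'(a)\psi'(b_0)$ for all $a \in A$, and a symmetric argument forces $\phi = \phi'$ and $\psi = \psi'$. For surjectivity: given $\tau \in \mis(A \ptp_\gamma B)$, in the unital case — which covers all applications in the paper — denote the units of $A$ and $B$ by $1_A, 1_B$ and set $\phi(a) = \tau(a \tp 1_B)$ and $\psi(b) = \tau(1_A \tp b)$. These are characters by multiplicativity of $\tau$, and the identity
\[
\tau(a \tp b) = \tau\bigl((a \tp 1_B)(1_A \tp b)\bigr) = \phi(a)\psi(b)
\]
exhibits $\tau = \phi \diamond \psi$ on $A \tp B$, hence on all of $A \ptp_\gamma B$ by density and continuity. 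In the non-unital case, one passes to unitizations $A_+ \ptp_\gamma B_+$ and restricts back.

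Finally, $\Phi$ is continuous in the Gelfand topology: for every fixed $u = \sum_i a_i \tp b_i \in A \tp B$, the map $(\phi, \psi) \mapsto \sum_i \phi(a_i)\psi(b_i)$ is jointly continuous on $\mis(A) \times \mis(B)$, and the uniform bound $\|\phi \diamond \psi\| \leq 1$ together with density of $A \tp B$ in $A \ptp_\gamma B$ extends continuity to all $u$. Since $\mis(A) \times \mis(B)$ is compact Hausdorff and $\mis(A \ptp_\gamma B)$ is Hausdorff, the continuous bijection $\Phi$ is automatically a homeomorphism. The main obstacle is the surjectivity step, particularly the treatment of non-unital algebras; in the unital case used throughout the paper, the identities $\phi(a) = \tau(a \tp 1_B)$, $\psi(b) = \tau(1_A \tp b)$ render the factorization of $\tau$ transparent and immediate.
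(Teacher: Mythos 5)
First, a point of comparison: the paper does not prove this theorem at all --- it is quoted from Tomiyama as an alternative route to the conclusion of Example \ref{exa:Tensor Products}, where the author gives a direct computation only for the special case $\fA\ptp_\gamma A$. Your surjectivity argument is essentially that computation (restrict $\tau$ to the two factors, verify $\tau=\phi\diamond\psi$ on elementary tensors via $a\tp b=(a\tp 1_B)(1_A\tp b)$, extend by density and continuity of $\tau$), and that half of your proof is sound in the unital setting, as are injectivity and the compactness argument at the end.

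The genuine gap is in the well-definedness of $\Phi$, i.e.\ the claim that $\phi\diamond\psi$ extends continuously to $A\ptp_\gamma B$. The estimate $|\phi(a)\psi(b)|\le\|a\|\,\|b\|=\gamma(a\tp b)$ on elementary tensors only yields, for a general $u=\sum_i a_i\tp b_i$, the bound $|(\phi\diamond\psi)(u)|\le\sum_i\|a_i\|\,\|b_i\|$, i.e.\ a bound by the \emph{projective} norm $\pi(u)$. Since every cross-norm satisfies $\gamma\le\pi$, this inequality points the wrong way and gives no control of $\phi\diamond\psi$ by $\gamma$. What you actually need is $\gamma\ge\epsilon$ (the injective norm), because $\epsilon(u)=\sup\{|(\phi'\tp\psi')(u)|:\|\phi'\|\le 1,\ \|\psi'\|\le 1\}$; equivalently, that the dual norm $\gamma^*$ is again a cross-norm. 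This is an additional hypothesis for an arbitrary cross-norm, not a formal consequence of $\gamma$ being a submultiplicative cross-norm, and note that automatic continuity of characters is of no help here since $\phi\diamond\psi$ is a priori defined only on the dense, non-complete subalgebra $A\tp B$. Establishing this boundedness is precisely the substantive content of Tomiyama's argument, and it is also exactly the property the paper imports from the preprint cited in Example \ref{exa:Tensor Products} (namely $\|\phi\circ f\|\le\|\phi\|\,\|f\|_\gamma$ for $f\in\fA\ptp_\gamma A$) rather than proving it. Without this step you have only shown that $\mis(A\ptp_\gamma B)$ embeds homeomorphically \emph{into} $\mis(A)\times\mis(B)$, not onto it. A secondary, lesser issue: the one-sentence reduction of the non-unital case to unitizations is too quick ($\mis(A_+)$ contains an extra point, and one must check that the characters of $A_+\ptp_\gamma B_+$ that survive restriction are exactly the products of genuine characters of $A$ and $B$), but since the paper works exclusively with unital algebras this is forgivable provided you flag it as omitted.
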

\end{example}

\paragraph{\bfseries Acknowledgment}
The author expresses his sincere gratitude to the referee for their careful
reading and suggestions that improved the presentation of this paper.

\end{document}